\documentclass[11pt,a4paper]{article}

\usepackage[T1]{fontenc}
\usepackage[utf8]{inputenc}
\usepackage[margin=1.05in]{geometry}
\usepackage{amsmath,amssymb,amsthm,mathtools}
\usepackage[hidelinks,pdfusetitle]{hyperref}
\hypersetup{pdftitle={Radial Integration and Ball Volume in Continuous Dimension},
            pdfauthor={Andreu Ballus Santacana}}
\usepackage{enumitem}

\theoremstyle{plain}
\newtheorem{theorem}{Theorem}[section]
\newtheorem{proposition}[theorem]{Proposition}

\newtheorem{corollary}[theorem]{Corollary}

\theoremstyle{definition}
\newtheorem{definition}[theorem]{Definition}
\newtheorem{axiom}{Axiom}

\theoremstyle{remark}
\newtheorem{remark}[theorem]{Remark}

\newcommand{\R}{\mathbb{R}}

\newcommand{\Z}{\mathbb{Z}}
\newcommand{\C}{\mathbb{C}}
\newcommand{\Cc}{C_c}
\newcommand{\Iop}{\mathcal{I}}

\title{Radial Integration and Ball Volume\\
in Continuous Dimension}

\author{Andreu Ballús Santacana\thanks{\texttt{andreu.ballus@uab.cat}.
Departament de Filosofia, Universitat Autònoma de Barcelona; ToposCircuitry, S.L.,
Granollers, Barcelona, Spain.}}

\date{\today}

\begin{document}

\maketitle

\begin{abstract}
We prove that on the space $\Cc(\R_{>0})$ of compactly supported continuous functions on the positive half-line, the only positive linear functionals satisfying scaling covariance of degree $x/2$ and Gaussian normalization to $\pi^{x/2}$ are those represented by the Mellin--Gamma measure
\[
d\mu_x(u) \;=\; \frac{\pi^{x/2}}{\Gamma(x/2)}\,u^{x/2-1}\,du, \qquad x>0.
\]
Among all positive Radon measures on $(0,\infty)$, the Mellin--Gamma family is the unique family compatible with both axioms; the classification is therefore a rigidity statement, not a derivation of a known formula. The proof reduces, via the logarithmic change of variable $u=e^t$, to the Haar uniqueness theorem on $(\R,+)$: scaling covariance forces translation-invariance of an associated measure on $\R$, and Gaussian normalization fixes the multiplicative constant through the Mellin transform of $e^{-u}$. The Euclidean ball-volume formula $V(x)=\pi^{x/2}/\Gamma(x/2+1)$ follows by integrating $\mu_x$ over the unit interval in the squared-radius variable. The induced dimension-shift structure factors through two multiplicative $1$-cocycles for the dimension-shift relation $x\mapsto x+2r$ on $\R_{>0}$: the radial-integration transport $R(x,r)$ and the ball-volume transport $T(x,r)$. They differ by the multiplicative coboundary of $\beta(x)=x$, and $T$ is independently characterized by a shifted Bohr--Mollerup theorem. The coboundary $\beta(x)$ is the categorical-dimension functional of the standard object in Deligne's interpolation category $\mathrm{Rep}(O_t)$.
\end{abstract}

\section{Introduction}\label{sec:intro}

The volume of the Euclidean unit ball in $\R^n$ is
\begin{equation}\label{eq:Vn}
V_n \;=\; \operatorname{vol}\bigl\{y\in\R^n : |y|\le 1\bigr\} \;=\; \frac{\pi^{n/2}}{\Gamma(n/2+1)}.
\end{equation}
Replacing $n$ by a real parameter $x>0$ gives the standard continuation
\begin{equation}\label{eq:Vx}
V(x) \;=\; \frac{\pi^{x/2}}{\Gamma(x/2+1)}.
\end{equation}
Agreement of \eqref{eq:Vx} with the integer-dimensional values does not, by itself, determine this continuation. For any entire function $h$ of subexponential growth,
\[
\frac{\pi^{z/2}}{\Gamma(z/2+1)}\;+\;\sin(\pi z)\,h(z)
\]
agrees with $V_n$ at every $n\in\Z_{\ge 1}$ but differs from~\eqref{eq:Vx} for general non-integer $z$. Hadamard's 1894 interpolation~\cite{Hadamard1894} of $1/\Gamma(z+1)$ by an entire function exhibits a concrete second analytic candidate. The question that selects~\eqref{eq:Vx} from among such candidates is therefore not interpolation but the imposition of additional structure on the candidate continuations.

This paper gives one such selection and proves that it is rigid. We work directly with the radial integration operator obtained from polar coordinates in the squared-radius variable $u = r^2$. For each $x>0$ we consider positive linear functionals $\Iop_x$ on $\Cc(\R_{>0})$ and impose two axioms: scaling covariance of degree $x/2$, and Gaussian normalization to $\pi^{x/2}$. The classification theorem (Theorem~\ref{thm:classification}) states that the unique family $\{\Iop_x\}_{x>0}$ satisfying both axioms is given by
\[
\Iop_x(\phi) \;=\; \frac{\pi^{x/2}}{\Gamma(x/2)}\int_0^\infty \phi(u)\,u^{x/2-1}\,du.
\]
The Mellin--Gamma form is therefore not posited but forced, given the framework of positive functionals on $\Cc(\R_{>0})$ and the two axioms; no assumption of holomorphy, of a closed-form ansatz for $V(x)$, or of properties of $\Gamma$ enters the proof. The Euclidean ball-volume formula~\eqref{eq:Vx} is recovered as $\mu_x((0,1))$, where $\mu_x$ is the Radon measure representing $\Iop_x$.

Three observations motivate the axioms. First, the classical polar-coordinate formula on $\R^n$ rewrites Euclidean integration in terms of a radial density $r^{n-1}$ together with a spherical surface measure. Bui and Randles~\cite{BuiRandles2022} extend this to general homogeneous settings, replacing $n-1$ by a homogeneous order and producing an associated Radon surface measure on the unit level set; the continuous-dimension analogue we work with is the simplest instance of this pattern. Second, the Gamma function is the Mellin transform of the Gaussian kernel $e^{-u}$, in the sense that $\int_0^\infty e^{-u}u^{s-1}\,du = \Gamma(s)$~\cite[\S 4]{FlajoletGourdonDumas1995}; this identity fixes the multiplicative constant in the classification. Third, $\Gamma$ is uniquely characterized on $(0,\infty)$ by the recurrence $\Gamma(x+1)=x\Gamma(x)$, the normalization $\Gamma(1)=1$, and log-convexity~\cite{Artin1964,MarichalZenaidi2022,MarichalZenaidi2024}; we use a shifted form of this characterization to give a second, independent rigidity statement for the dimension-shift transport in Section~\ref{sec:bohrmollerup}.

The classification produces a measure-valued family $\{\mu_x\}_{x>0}$. The induced dimension-shift map factors through two distinct one-parameter cocycles:
\begin{itemize}[leftmargin=2em,topsep=0pt,itemsep=2pt]
\item the radial-integration transport $R(x,r) = \pi^r\,\Gamma(x/2)/\Gamma(x/2+r)$, controlling shifts of the radial measure $\mu_x$;
\item the ball-volume transport $T(x,r) = \pi^r\,\Gamma(x/2+1)/\Gamma(x/2+r+1)$, controlling shifts of the scalar $V(x)$.
\end{itemize}
The two transports satisfy $T(x,r) = \frac{x}{x+2r}\,R(x,r)$. In Section~\ref{sec:cocycles} we show that $R$ and $T$ are multiplicative $1$-cocycles for the dimension-shift relation $x\mapsto x+2r$ on $\R_{>0}$, and that the factor $\frac{x+2r}{x}$ relating them is the multiplicative coboundary of the linear function $\beta(x) = x$.

\paragraph{Categorical interpretation.}
The function $\beta(x) = x$ admits an interpretation in Deligne's interpolation categories. The framework for the orthogonal series is recalled in~\cite[\S\S 9--10]{Deligne2007}, and we use the Brauer-category presentation of~\cite[\S\S 2.1--2.2]{ComesHeidersdorf2017}, in which $\mathrm{Rep}(O_t)$ is the Karoubi envelope of the additive envelope of the Brauer category $\mathcal B(t)$. The standard generating object $\mathsf X_t$ has categorical (quantum) dimension $\dim_{\mathrm{cat}}(\mathsf X_t) = t$; hence $\beta(x) = \dim_{\mathrm{cat}}(\mathsf X_x)$, and the coboundary distinguishing $R$ from $T$ is the categorical-dimension functional applied to the standard object. This is recorded as Proposition~\ref{prop:cat-coboundary}. The interpretation is external to the analytic content: the construction of $\mathcal M$ in Sections~\ref{sec:axioms}--\ref{sec:classification} does not refer to $\mathrm{Rep}(O_t)$, and the proposition is included only to identify the coboundary with an independently defined categorical functional.

\paragraph{Outline.}
Section~\ref{sec:prelim} fixes the polar-coordinate setup and the change of variable $u=r^2$. Section~\ref{sec:axioms} states the two axioms. Section~\ref{sec:classification} proves the classification theorem. Section~\ref{sec:ballvol} obtains the ball-volume formula. Section~\ref{sec:cocycles} develops the cocycle structure of $R$ and $T$ and identifies the coboundary with the categorical-dimension functional. Section~\ref{sec:bohrmollerup} gives the shifted Bohr--Mollerup characterization of $T$. Section~\ref{sec:observables} embeds $V(x)$ in a wider system of Mellin observables. Section~\ref{sec:homogeneous} extends the mechanism to the generalized homogeneous polar setting. Section~\ref{sec:summary} compares the three routes by which the closed form for $V(x)$ can be obtained.

\section{Preliminaries}\label{sec:prelim}

For integer $n\ge 1$ and a measurable $\phi:[0,\infty)\to\R$ for which the integral is well-defined,
\begin{equation}\label{eq:polarn}
\int_{\R^n}\phi(|y|^2)\,dy \;=\; \omega_{n-1}\int_0^\infty \phi(r^2)\,r^{n-1}\,dr,
\end{equation}
where $\omega_{n-1}=2\pi^{n/2}/\Gamma(n/2)$ is the surface area of the unit sphere $S^{n-1}\subset\R^n$. Substituting $u=r^2$, $du=2r\,dr$, and absorbing the factor of $2$, \eqref{eq:polarn} becomes
\begin{equation}\label{eq:polarn-u}
\int_{\R^n}\phi(|y|^2)\,dy \;=\; \frac{\pi^{n/2}}{\Gamma(n/2)}\int_0^\infty \phi(u)\,u^{n/2-1}\,du.
\end{equation}
We write this radial-integration coefficient as
\[
C(n) \;:=\; \frac{\pi^{n/2}}{\Gamma(n/2)}.
\]
Setting $\phi=\mathbf 1_{(0,1)}$ in \eqref{eq:polarn-u} recovers \eqref{eq:Vn}:
\[
V_n \;=\; C(n)\int_0^1 u^{n/2-1}\,du \;=\; \frac{2\,C(n)}{n} \;=\; \frac{\pi^{n/2}}{\Gamma(n/2+1)}.
\]
Hence, at integer dimensions,
\begin{equation}\label{eq:VC-relation}
V(n) \;=\; \frac{2}{n}\,C(n).
\end{equation}
This elementary relation between radial-integration normalization and ball-volume normalization will reappear in continuous dimension as the algebraic origin of the gap between the two transports of Section~\ref{sec:cocycles}.

For real $x>0$, define
\begin{equation}\label{eq:Cdef}
C(x) \;:=\; \frac{\pi^{x/2}}{\Gamma(x/2)}, \qquad V(x) \;:=\; \frac{\pi^{x/2}}{\Gamma(x/2+1)}.
\end{equation}
Using $\Gamma(x/2+1) = (x/2)\Gamma(x/2)$, we have $V(x)=(2/x)\,C(x)$, the continuous-dimension version of~\eqref{eq:VC-relation}. The reader should not at this point regard \eqref{eq:Cdef} as a definition imposed on the problem; it is the continuation we will \emph{derive} from the axioms below.

\section{Axioms for radial integration in continuous dimension}\label{sec:axioms}

Fix $x>0$. Let $\Cc(\R_{>0})$ denote the space of continuous, compactly supported real-valued functions on $(0,\infty)$. Suppose
\[
\Iop_x\colon \Cc(\R_{>0})\;\longrightarrow\;\R
\]
is a positive linear functional. By the Riesz--Markov--Kakutani representation theorem~\cite[Thm.~7.2]{Folland1999}, there exists a unique positive Radon measure $\mu_x$ on $(0,\infty)$, locally finite, such that
\[
\Iop_x(\phi) \;=\; \int_0^\infty \phi(u)\,d\mu_x(u) \qquad\text{for every } \phi\in\Cc(\R_{>0}).
\]
We impose two axioms on the family $\{\Iop_x\}_{x>0}$.

\begin{axiom}[Scaling covariance]\label{ax:scaling}
For every $x>0$, every $\lambda>0$, and every $\phi\in\Cc(\R_{>0})$,
\[
\Iop_x\bigl(\phi(\lambda\,\cdot)\bigr) \;=\; \lambda^{-x/2}\,\Iop_x(\phi).
\]
\end{axiom}

This expresses that, in the squared-radial variable $u=r^2$, the homogeneity degree of the integration operator is $x/2$. At integer $x=n$, it is the change-of-variables law for the Lebesgue radial integral on $\R^n$ written in $u$-coordinates.

\begin{axiom}[Gaussian normalization]\label{ax:gauss}
For every $x>0$, the function $u\mapsto e^{-u}$ is $\mu_x$-integrable on $(0,\infty)$, and
\[
\int_0^\infty e^{-u}\,d\mu_x(u) \;=\; \pi^{x/2}.
\]
\end{axiom}

This is the continuous-dimension counterpart of the Gaussian integral identity $\int_{\R^n} e^{-|y|^2}\,dy$ ${} = \pi^{n/2}$.

\begin{remark}[Logical order of the axioms]\label{rmk:order}
Axiom~\ref{ax:scaling} is a statement about $\Iop_x$ acting on $\Cc(\R_{>0})$ alone, and is in this sense the \emph{primary} axiom. Axiom~\ref{ax:gauss} is best read as a normalization condition imposed \emph{after} a structural classification of the candidate measures: in the proof of Theorem~\ref{thm:classification}, Axiom~\ref{ax:scaling} is shown to force $\mu_x$ to have a power-law density $C(x)\,u^{x/2-1}\,du$ with an undetermined positive constant $C(x)$; Axiom~\ref{ax:gauss} then both verifies that $e^{-u}$ is integrable against this density (by the convergence of the Mellin integral defining $\Gamma(x/2)$ for $x>0$) and fixes $C(x)$.
\end{remark}

\begin{remark}[On the choice of normalizing probe]\label{rmk:probe-choice}
The Gaussian probe $u\mapsto e^{-u}$ in Axiom~\ref{ax:gauss} is one valid choice among many. Inspection of the proof of Theorem~\ref{thm:classification} shows that any positive nonzero test function $\psi$ for which $\int_0^\infty \psi(u)\,u^{x/2-1}\,du$ converges absolutely and is positive at the relevant $x$ would suffice to fix $C(x)$, with a different prescribed value $\Iop_x(\psi) = N(x) > 0$ on the right-hand side simply yielding a different constant. The Gaussian choice is privileged \emph{not} by any structural uniqueness within the axiomatic framework but by its agreement with the classical Euclidean Gaussian integral $\int_{\R^n} e^{-|y|^2}\,dy = \pi^{n/2}$ at integer $x=n$: it is the unique probe that makes Axiom~\ref{ax:gauss} reduce, at integer dimensions, to the standard Gaussian normalization in $\R^n$. The classification of $\mu_x$ as a Mellin--Gamma density does not depend on this choice; only the specific constant $\pi^{x/2}/\Gamma(x/2)$ does.
\end{remark}

\section{The classification theorem}\label{sec:classification}

\begin{theorem}[Mellin--Gamma classification of radial integration]\label{thm:classification}
Let $x>0$, and let $\Iop_x$ be a positive linear functional on $\Cc(\R_{>0})$ satisfying Axioms~\ref{ax:scaling} and~\ref{ax:gauss}. Let $\mu_x$ be the Radon measure on $(0,\infty)$ representing $\Iop_x$. Then
\[
d\mu_x(u) \;=\; \frac{\pi^{x/2}}{\Gamma(x/2)}\,u^{x/2-1}\,du.
\]
Equivalently, for every $\phi\in\Cc(\R_{>0})$,
\[
\Iop_x(\phi) \;=\; \frac{\pi^{x/2}}{\Gamma(x/2)}\int_0^\infty \phi(u)\,u^{x/2-1}\,du.
\]
\end{theorem}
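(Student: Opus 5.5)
The plan is to follow the route announced in the abstract: pass to logarithmic coordinates and invoke Haar uniqueness on $(\R,+)$.

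\textbf{Step 1: reduce to a translation-invariant functional on $\R$.} Fix $x>0$ and set $s=x/2$. For $\psi\in\Cc(\R)$, define $\phi(u)=\psi(\log u)\,u^{-s}$. The map $\psi\mapsto\phi$ is a linear bijection of $\Cc(\R)$ onto $\Cc(\R_{>0})$ that preserves positivity. Then set
\[
J(\psi):=\Iop_x\bigl(\psi(\log\cdot)\,(\cdot)^{-s}\bigr),
\]
which is a positive linear functional on $\Cc(\R)$. For $a\in\R$ and $\lambda=e^{a}$, the function $u\mapsto\psi(\log u+a)\,u^{-s}$ equals $\lambda^{s}\,\phi(\lambda u)$. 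Axiom~\ref{ax:scaling} therefore gives
\[
J(\psi(\cdot+a))=\lambda^{s}\lambda^{-s}\,\Iop_x(\phi)=J(\psi).
\]
So $J$ is translation invariant.

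\textbf{Step 2: apply Haar uniqueness.} By the uniqueness of Haar measure on $(\R,+)$, there is a constant $c\ge0$ with $J(\psi)=c\int_\R\psi(t)\,dt$. To see this concretely, the Radon measure representing $J$ is translation invariant and hence a multiple of Lebesgue measure. Undoing the substitution $t=\log u$, $dt=du/u$, gives
\[
\Iop_x(\phi)=J\bigl(\phi(e^{t})e^{st}\bigr)=c\int_0^\infty\phi(u)\,u^{s-1}\,du,
\]
so by uniqueness in Riesz--Markov--Kakutani, $d\mu_x=c\,u^{x/2-1}\,du$.

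\textbf{Step 3: fix the constant.} Axiom~\ref{ax:gauss} concerns $e^{-u}$, which is not compactly supported, so I need a limiting argument. Take an increasing sequence $\phi_k\in\Cc(\R_{>0})$ with $0\le\phi_k\uparrow e^{-u}$ pointwise on $(0,\infty)$; for example, multiply $e^{-u}$ by continuous cutoffs equal to $1$ on $[1/k,k]$. Monotone convergence applied to the measure $\mu_x=c\,u^{s-1}du$ gives
\[
\int e^{-u}\,d\mu_x=c\int_0^\infty e^{-u}u^{s-1}\,du=c\,\Gamma(s),
\]
which is finite since $s>0$. Axiom~\ref{ax:gauss} then yields $c\,\Gamma(x/2)=\pi^{x/2}$. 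This forces $c\neq0$ and $c=\pi^{x/2}/\Gamma(x/2)$.

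The main obstacle is Step 1. The weight $u^{-s}$ must be chosen so that the multiplier $\lambda^{-x/2}$ cancels exactly, and I must check that the substitution really is a positivity-preserving bijection of the compactly supported spaces, so that Haar uniqueness applies to a genuine positive functional on $\Cc(\R)$. The remaining steps are routine.
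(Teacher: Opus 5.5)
Your proof is correct and follows the paper's route: logarithmic change of variables, Haar uniqueness on $(\mathbb{R},+)$, then the Gamma integral to fix the constant. The only difference is technical. You twist test functions by $u^{-x/2}$ at the level of functionals on $C_c$, while the paper pushes $\mu_x$ forward to $\mathbb{R}$ and multiplies by $e^{-at}$ at the level of measures; this is the same cancellation, and your version avoids extending quasi-invariance from indicators to functions.
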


The proof has two stages. The first uses Axiom~\ref{ax:scaling} alone to reduce $\mu_x$ to translation-invariant Lebesgue measure on $\R$ via the logarithmic change of variables, and then applies Haar uniqueness to identify the density up to a positive constant. The second uses Axiom~\ref{ax:gauss} to fix that constant.

\begin{proof}
Set $a := x/2 > 0$.

\smallskip
\emph{Step 1: scaling covariance, expressed at the level of Radon measures.}

For each $\lambda > 0$, let $S_\lambda\colon\R_{>0}\to\R_{>0}$ be the dilation $S_\lambda(u) = \lambda u$. By Axiom~\ref{ax:scaling}, for every $\phi\in\Cc(\R_{>0})$,
\[
\int_0^\infty \phi(u)\,d\bigl((S_\lambda)_*\mu_x\bigr)(u)
\;=\;
\int_0^\infty \phi(\lambda u)\,d\mu_x(u)
\;=\;
\lambda^{-a}\int_0^\infty \phi(u)\,d\mu_x(u).
\]
Two positive Radon measures on $(0,\infty)$ that agree as functionals on $\Cc(\R_{>0})$ are equal~\cite[Thm.~7.2]{Folland1999}. Hence
\begin{equation}\label{eq:pushforward-scaling}
(S_\lambda)_*\mu_x \;=\; \lambda^{-a}\mu_x \qquad\text{for every } \lambda>0.
\end{equation}
Equivalently, for every Borel set $A\subset(0,\infty)$,
\begin{equation}\label{eq:scaling-sets}
\mu_x(\lambda A) \;=\; \lambda^a\,\mu_x(A).
\end{equation}

\smallskip
\emph{Step 2: pushforward to $(\R,+)$ and Haar uniqueness.}

Let
\[
q\colon\R\to\R_{>0}, \qquad q(t)=e^t,
\]
and define the positive Radon measure $\nu_x$ on $\R$ by
\[
\nu_x \;:=\; (q^{-1})_*\mu_x,
\qquad\text{that is,}\qquad \nu_x(E)\;=\;\mu_x(q(E))\;=\;\mu_x(e^E)
\]
for every Borel $E\subset\R$, where $e^E:=\{e^t:t\in E\}$. Since $q$ is a homeomorphism, $\nu_x$ is a locally finite Borel measure on $\R$. The multiplicative scaling law~\eqref{eq:scaling-sets} for $\mu_x$ becomes the additive quasi-invariance law
\begin{equation}\label{eq:nu-quasiinv}
\nu_x(E + s) \;=\; e^{a s}\,\nu_x(E) \qquad\text{for every Borel } E\subset\R,\ s\in\R.
\end{equation}
Indeed, $\nu_x(E+s) = \mu_x(e^{E+s}) = \mu_x(e^s\cdot e^E) = e^{as}\mu_x(e^E) = e^{as}\nu_x(E)$.

Define a new positive Radon measure $\eta_x$ on $\R$ by
\[
d\eta_x(t) \;:=\; e^{-a t}\,d\nu_x(t).
\]
We claim $\eta_x$ is translation-invariant. By Riesz--Markov uniqueness it suffices to check, for every $f\in C_c(\R)$ and every $s\in\R$,
\[
\int_\R f(t+s)\,d\eta_x(t) \;=\; \int_\R f(t)\,d\eta_x(t).
\]
By definition,
\[
\int_\R f(t+s)\,d\eta_x(t) \;=\; \int_\R f(t+s)\,e^{-a t}\,d\nu_x(t).
\]
The relation~\eqref{eq:nu-quasiinv} integrates to: for every $g\in C_c(\R)$,
\begin{equation}\label{eq:nu-quasiinv-fcnal}
\int_\R g(t+s)\,d\nu_x(t) \;=\; e^{-a s}\int_\R g(t)\,d\nu_x(t),
\end{equation}
obtained by applying~\eqref{eq:nu-quasiinv} to indicators and extending by linearity and monotone convergence to nonnegative Borel functions. Apply~\eqref{eq:nu-quasiinv-fcnal} with $g(t) := f(t)\,e^{-a(t-s)}$, so that $g(t+s) = f(t+s)\,e^{-a t}$:
\[
\int_\R f(t+s)\,e^{-a t}\,d\nu_x(t) \;=\; e^{-a s}\int_\R f(t)\,e^{-a(t-s)}\,d\nu_x(t) \;=\; \int_\R f(t)\,e^{-a t}\,d\nu_x(t) \;=\; \int_\R f(t)\,d\eta_x(t).
\]
This establishes translation invariance of $\eta_x$.

By the Haar uniqueness theorem for the locally compact group $(\R,+)$~\cite[Thm.~11.9]{Folland1999}, every locally finite translation-invariant Borel measure on $\R$ is a constant multiple of Lebesgue measure. Hence there exists $C(x)\ge 0$ with
\[
d\eta_x(t) \;=\; C(x)\,dt, \qquad\text{i.e.,}\qquad d\nu_x(t) \;=\; C(x)\,e^{a t}\,dt.
\]
Pulling back through $u=q(t)=e^t$, $t=\log u$, $dt=du/u$:
\begin{equation}\label{eq:density}
d\mu_x(u) \;=\; C(x)\,u^{a}\cdot\frac{du}{u} \;=\; C(x)\,u^{x/2-1}\,du,
\end{equation}
with $C(x) \ge 0$ a constant to be determined by Axiom~\ref{ax:gauss}.

\smallskip
\emph{Step 3: Gaussian normalization fixes $C(x)$.}

The integrand $e^{-u}u^{x/2-1}$ is positive and integrable on $(0,\infty)$ for $x>0$, with classical Mellin value
\[
\int_0^\infty e^{-u}\,u^{x/2-1}\,du \;=\; \Gamma(x/2)
\]
(see~\cite[\S 4]{FlajoletGourdonDumas1995}). Substituting~\eqref{eq:density} into Axiom~\ref{ax:gauss},
\[
\pi^{x/2} \;=\; \int_0^\infty e^{-u}\,d\mu_x(u) \;=\; C(x)\,\Gamma(x/2),
\]
so $C(x) = \pi^{x/2}/\Gamma(x/2)$. This proves the theorem.
\end{proof}

\begin{remark}[The logarithmic change of variables as the structural mechanism]
The single move that drives the classification is the pullback through $q(t)=e^t$, which converts multiplicative scaling on $(0,\infty)$ into additive translation on $\R$. Once that conversion is made, Haar uniqueness on $(\R,+)$ identifies the measure up to a constant, and the Gaussian integral fixes the constant. The Gamma function thus appears in the Euclidean ball-volume formula not as an interpolation device but as the analytic record of a geometric fact: continuous dimension is multiplicative scaling written additively.
\end{remark}

\subsection*{Mellin reformulation}

Theorem~\ref{thm:classification} can be restated as a Mellin identity. Recall that the Mellin transform of a function $\phi$ on $(0,\infty)$ is
\[
\mathcal M[\phi](s) \;:=\; \int_0^\infty \phi(u)\,u^{s-1}\,du,
\]
defined on the strip of $s\in\C$ where the integral converges absolutely~\cite[\S 4]{FlajoletGourdonDumas1995}.

\begin{corollary}[Functional as a normalized Mellin transform]\label{cor:mellin-form}
Under the hypotheses of Theorem~\ref{thm:classification}, for every $\phi\in\Cc(\R_{>0})$ and every $x>0$,
\[
\Iop_x(\phi) \;=\; \frac{\pi^{x/2}}{\Gamma(x/2)}\,\mathcal M[\phi]\!\left(\tfrac{x}{2}\right).
\]
More generally, for any nonnegative Borel $\phi$ for which the Mellin integral converges absolutely at $s=x/2$, the value $\int_0^\infty \phi(u)\,d\mu_x(u)$ equals $\frac{\pi^{x/2}}{\Gamma(x/2)}\mathcal M[\phi](x/2)$.
\end{corollary}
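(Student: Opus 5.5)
The first assertion is a restatement of Theorem~\ref{thm:classification}. By that theorem, for every $\phi\in\Cc(\R_{>0})$ we have $\Iop_x(\phi)=\frac{\pi^{x/2}}{\Gamma(x/2)}\int_0^\infty \phi(u)\,u^{x/2-1}\,du$. Since $\phi$ is continuous and supported in a compact subset of $(0,\infty)$, the function $u\mapsto\phi(u)u^{x/2-1}$ is bounded and compactly supported away from $0$. Hence the Mellin integral at $s=x/2$ converges absolutely, and it coincides with $\mathcal M[\phi](x/2)$ by definition. Substituting gives the displayed identity.

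For the extension to nonnegative Borel $\phi$, I will use the identification of measures rather than functionals. Theorem~\ref{thm:classification} asserts an equality of Radon measures on $(0,\infty)$,
\[
\mu_x=\frac{\pi^{x/2}}{\Gamma(x/2)}\,u^{x/2-1}\,du .
\]
This was obtained via Riesz--Markov uniqueness in its proof. Because two positive Borel measures that agree on every Borel set assign the same integral to every nonnegative Borel function, I will check this in three steps. For indicators $\mathbf 1_A$ the identity is the set equality $\mu_x(A)=C(x)\int_A u^{x/2-1}\,du$. Linearity extends it to nonnegative simple functions. The monotone convergence theorem, applied simultaneously on both sides to an increasing sequence of simple functions converging to $\phi$, extends it to arbitrary nonnegative Borel $\phi$.

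This yields $\int\phi\,d\mu_x=C(x)\int_0^\infty\phi(u)u^{x/2-1}\,du$ in $[0,\infty]$. Under the absolute-convergence hypothesis the right side is finite and equals $C(x)\,\mathcal M[\phi](x/2)$, which proves the second claim.

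The only point requiring care is the passage from agreement on $\Cc(\R_{>0})$ to agreement on all Borel sets. This is where local finiteness (the Radon property) of both measures is used, through the uniqueness clause of Riesz--Markov--Kakutani already invoked in Step~1 of the proof of Theorem~\ref{thm:classification}. The density measure $C(x)u^{x/2-1}\,du$ is itself Radon on $(0,\infty)$, because $u^{x/2-1}$ is locally integrable there. Once this is in place, the rest is routine measure theory, and no property of $\Gamma$ beyond its appearance in the constant is needed.
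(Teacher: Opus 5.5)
Your proposal is correct and follows the paper's argument: the paper derives both claims in one line from the measure identity $d\mu_x(u)=\frac{\pi^{x/2}}{\Gamma(x/2)}u^{x/2-1}\,du$ of Theorem~\ref{thm:classification}, and your indicator--simple-function--monotone-convergence step just makes that deduction explicit. One minor attribution point: in the theorem's proof the equality of measures comes directly from Haar uniqueness and the pullback under $u=e^t$, so no separate appeal to Riesz--Markov uniqueness is needed at this stage, though invoking it is harmless.
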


\begin{proof}
Direct from the closed form $d\mu_x(u) = \frac{\pi^{x/2}}{\Gamma(x/2)}\,u^{x/2-1}\,du$ given by Theorem~\ref{thm:classification}.
\end{proof}

This reformulation locates the classification within the Mellin framework: every scaling-covariant, Gaussian-normalized positive functional on $\Cc(\R_{>0})$ is a normalized Mellin transform evaluated at the dimension parameter $s=x/2$. The closed forms for sublevel observables (Proposition~\ref{prop:sublevel}) and Gaussian moment observables (Proposition~\ref{prop:moments}) are then evaluations of $\mathcal M[\mathbf 1_{(0,a)}]$ and $\mathcal M[u^q e^{-u}]$ at $s=x/2$, respectively, with the Gamma factors in their closed forms reflecting the standard Mellin transforms of those test functions. The cocycle $T$ in turn encodes the analytic continuation of these Mellin transforms under shifts in the spectral parameter $s\mapsto s+r$.

We do not, however, claim that the present classification \emph{requires} Mellin theory as input. Theorem~\ref{thm:classification} is proved from two axioms on a positive functional on $\Cc(\R_{>0})$, with no reference to Mellin transforms in its hypotheses or proof. The Mellin identification is a consequence of the classification, not an assumption: scaling covariance plus Gaussian normalization on a real-variable functional space turns out to single out, among all positive Radon measures on $(0,\infty)$, exactly the Mellin density associated to $s=x/2$. This is why we treat the result as a classification rather than a Mellin-theoretic computation.

\section{The ball-volume formula}\label{sec:ballvol}

Define
\begin{equation}\label{eq:Vdef}
V(x) \;:=\; \mu_x\bigl(\{u\in(0,1)\}\bigr)
\;=\; \int_{(0,1)} d\mu_x(u),
\end{equation}
the $\mu_x$-mass of the unit interval in the squared-radius variable $u$. Under the change of variable $u=r^2$, the set $u\in(0,1)$ corresponds to $r\in(0,1)$, the open unit ball in radial coordinates, and the radial measure of the unit ball in $\R^x$ (when $x\in\Z_{\ge 1}$) coincides with $\mu_x((0,1))$ by~\eqref{eq:polarn-u}. Hence~\eqref{eq:Vdef} agrees with the classical Lebesgue volume of the unit ball at integer dimensions and is the natural extension of that quantity to continuous dimension.

\begin{corollary}[Ball-volume formula]\label{cor:Vx}
For every $x>0$,
\[
V(x) \;=\; \frac{\pi^{x/2}}{\Gamma(x/2+1)}.
\]
\end{corollary}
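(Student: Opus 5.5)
The plan is to compute $V(x)$ directly from the closed form of $\mu_x$ supplied by Theorem~\ref{thm:classification}, and then simplify using the Gamma recurrence. By definition~\eqref{eq:Vdef}, $V(x)=\mu_x((0,1))$. The indicator $\mathbf 1_{(0,1)}$ is not in $\Cc(\R_{>0})$, so the first step is to justify evaluating $\mu_x$ on it through the density. Theorem~\ref{thm:classification} identifies $\mu_x$ as a Radon measure, not merely as a functional, and this equality of measures holds on all Borel sets. Alternatively, Corollary~\ref{cor:mellin-form} applies to the nonnegative Borel function $\mathbf 1_{(0,1)}$, whose Mellin integral converges absolutely at $s=x/2>0$. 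Either route gives
\[
V(x)=\frac{\pi^{x/2}}{\Gamma(x/2)}\int_0^1 u^{x/2-1}\,du .
\]

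The second step is the elementary integral. Since $x/2>0$, the integrand $u^{x/2-1}$ is integrable near $0$, and
\[
\int_0^1 u^{x/2-1}\,du=\frac{2}{x}.
\]
This yields $V(x)=(2/x)\,C(x)$, which is the continuous-dimension relation already recorded after~\eqref{eq:Cdef}.

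The final step is the functional equation $\Gamma(x/2+1)=(x/2)\,\Gamma(x/2)$. One obtains it by integrating $\int_0^\infty e^{-u}u^{x/2}\,du$ by parts; the boundary terms vanish because $x/2>0$. Substituting gives
\[
\frac{2}{x}\cdot\frac{\pi^{x/2}}{\Gamma(x/2)}=\frac{\pi^{x/2}}{\Gamma(x/2+1)}.
\]

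No step is a real obstacle. The only point needing care is the first one, namely passing from $\Cc$ test functions to the indicator of $(0,1)$. If one wanted to avoid citing equality of measures on Borel sets, one could approximate $\mathbf 1_{(0,1)}$ from below by an increasing sequence in $\Cc(\R_{>0})$, for instance piecewise-linear functions supported in $[1/n,1-1/n]$, and apply monotone convergence on both sides of the identity in Theorem~\ref{thm:classification}.
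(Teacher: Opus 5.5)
Your proposal is correct and follows the paper's own proof. Like the paper, you evaluate $\mu_x((0,1))$ with the Mellin--Gamma density from Theorem~\ref{thm:classification}, compute $\int_0^1 u^{x/2-1}\,du = 2/x$, and apply $\Gamma(x/2+1)=(x/2)\,\Gamma(x/2)$. Your extra justifications (extending from compactly supported test functions to the indicator of $(0,1)$, and the integration-by-parts proof of the Gamma recurrence) are sound; the paper leaves them implicit.
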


\begin{proof}
By Theorem~\ref{thm:classification},
\[
V(x) \;=\; \frac{\pi^{x/2}}{\Gamma(x/2)}\int_0^1 u^{x/2-1}\,du \;=\; \frac{\pi^{x/2}}{\Gamma(x/2)}\cdot\frac{2}{x} \;=\; \frac{\pi^{x/2}}{(x/2)\Gamma(x/2)} \;=\; \frac{\pi^{x/2}}{\Gamma(x/2+1)},
\]
using $\Gamma(x/2+1) = (x/2)\,\Gamma(x/2)$.
\end{proof}

\section{The two transports as $1$-cocycles}\label{sec:cocycles}

Define the \emph{dimension-shift relation} on $X := \R_{>0}$ as the partial binary operation
\begin{equation}\label{eq:Rshift}
(x,r) \;\longmapsto\; x + 2r, \qquad x\in X,\ r\in\R\text{ with } x+2r>0.
\end{equation}
For nonnegative $r$ the operation is total on $X$; for $r<0$ it is defined only when $x+2r>0$, i.e., it is a partial right action of the additive monoid $\R_{\ge 0}$ extended to admissible negative shifts. Throughout this section, ``admissible'' means that all shift parameters appearing in a given expression yield arguments in $\R_{>0}$.

For a positive function $F\colon \{(x,r) : x>0,\,x+2r>0\}\to(0,\infty)$, the \emph{$1$-cocycle condition} for the dimension-shift relation is
\begin{equation}\label{eq:cocycle}
F(x,\,r+s) \;=\; F(x+2r,\,s)\,F(x,\,r)
\qquad\text{for all admissible } x,r,s,
\end{equation}
together with the normalization $F(x,0)=1$. A function $F$ satisfying~\eqref{eq:cocycle} is a (multiplicative) \emph{dimension-shift cocycle} on $X$. Two such cocycles $F_1, F_2$ \emph{differ by a coboundary} if there is a positive function $\beta\colon X\to(0,\infty)$ with
\begin{equation}\label{eq:coboundary}
\frac{F_1(x,r)}{F_2(x,r)} \;=\; \frac{\beta(x+2r)}{\beta(x)}
\qquad\text{for all admissible } x,r.
\end{equation}
We now show that the radial-integration and ball-volume transports are dimension-shift cocycles, and identify the coboundary connecting them.

\begin{definition}[Radial and ball-volume transports]\label{def:transports}
For $x>0$ and $r$ with $x+2r>0$, define
\[
R(x,r) \;:=\; \frac{C(x+2r)}{C(x)}, \qquad
T(x,r) \;:=\; \frac{V(x+2r)}{V(x)}.
\]
\end{definition}

\begin{proposition}[Closed forms]\label{prop:closedforms}
For all admissible $x,r$,
\[
R(x,r) \;=\; \pi^r\,\frac{\Gamma(x/2)}{\Gamma(x/2+r)}, \qquad
T(x,r) \;=\; \pi^r\,\frac{\Gamma(x/2+1)}{\Gamma(x/2+r+1)}.
\]
Moreover, $T(x,r) = \dfrac{x}{x+2r}\,R(x,r).$
\end{proposition}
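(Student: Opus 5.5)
The plan is to substitute the explicit formulas~\eqref{eq:Cdef} for $C$ and $V$ into Definition~\ref{def:transports}; both are legitimate here, since Theorem~\ref{thm:classification} and Corollary~\ref{cor:Vx} show they are the values forced by the axioms. Admissibility means $x>0$ and $x+2r>0$. Then $x/2>0$ and $x/2+r>0$, so every Gamma value that appears is finite and positive, and every ratio below is well defined and positive.

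For $R$, I write
\[
R(x,r)=\frac{\pi^{(x+2r)/2}}{\Gamma(x/2+r)}\cdot\frac{\Gamma(x/2)}{\pi^{x/2}}
=\pi^{r}\,\frac{\Gamma(x/2)}{\Gamma(x/2+r)}.
\]
The powers of $\pi$ collapse because $(x+2r)/2 - x/2 = r$.

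For $T$, the same computation with $\Gamma(\cdot+1)$ in the denominators gives
\[
T(x,r)=\pi^r\,\frac{\Gamma(x/2+1)}{\Gamma(x/2+r+1)}.
\]

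For the relation between the two transports, I would use the functional equation $\Gamma(y+1)=y\,\Gamma(y)$ for $y>0$. It is already invoked in the proof of Corollary~\ref{cor:Vx}, and here I apply it at $y=x/2$ and at $y=x/2+r$, both positive by admissibility. This gives
\[
\frac{\Gamma(x/2+1)}{\Gamma(x/2+r+1)}=\frac{(x/2)\,\Gamma(x/2)}{(x/2+r)\,\Gamma(x/2+r)}=\frac{x}{x+2r}\cdot\frac{\Gamma(x/2)}{\Gamma(x/2+r)}.
\]
Multiplying by $\pi^r$ yields $T(x,r)=\frac{x}{x+2r}R(x,r)$. As a check, the same identity follows directly from $V(y)=(2/y)\,C(y)$ in Section~\ref{sec:prelim}:
\[
T=\frac{(2/(x+2r))\,C(x+2r)}{(2/x)\,C(x)}=\frac{x}{x+2r}\,R.
\]
I would present this second route as the cleaner one, because it avoids Gamma manipulations entirely.

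There is no substantive obstacle. The only point to handle with care is negative $r$: the Gamma identity must be applied only at positive arguments, and admissibility guarantees exactly that. Nothing else needs checking.
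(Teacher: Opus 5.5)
Your proposal is correct and matches the paper's proof: substitute the closed forms of $C$ and $V$ into Definition~\ref{def:transports}, then simplify using $\Gamma(y+1)=y\,\Gamma(y)$ at $y=x/2$ and $y=x/2+r$, where admissibility keeps both arguments positive. Your second route to $T=\frac{x}{x+2r}R$ via $V(y)=(2/y)\,C(y)$ repackages the same fact, which the paper records in Remark~\ref{rmk:elem-coboundary}.
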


\begin{proof}
Substitute the closed forms~\eqref{eq:Cdef} and Corollary~\ref{cor:Vx} into Definition~\ref{def:transports}, and simplify using $\Gamma(x/2+1)=(x/2)\Gamma(x/2)$ and $\Gamma(x/2+r+1)=(x/2+r)\Gamma(x/2+r)$.
\end{proof}

\begin{theorem}[Cocycle structure]\label{thm:cocycles}
Both $R$ and $T$ are dimension-shift cocycles in the sense of~\eqref{eq:cocycle}: each satisfies $F(x,0)=1$ together with the cocycle identity for all admissible $x,r,s$.
\end{theorem}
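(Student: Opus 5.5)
The plan is to avoid Gamma-function manipulations altogether and derive the cocycle property directly from Definition~\ref{def:transports}. Both transports have the form of a ratio $G(x+2r)/G(x)$ of a single positive function $G\colon\R_{>0}\to(0,\infty)$: for $R$ take $G=C$, for $T$ take $G=V$. Any such ratio is automatically a multiplicative dimension-shift cocycle, so I will prove this general fact once and then specialize.

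First, I will check that $C(x)>0$ and $V(x)>0$ for all $x>0$. This ensures that $R$ and $T$ are well-defined positive functions on the admissible domain $\{(x,r):x>0,\ x+2r>0\}$, as the definition of a cocycle requires. Positivity follows from \eqref{eq:Cdef} together with $\Gamma>0$ on $(0,\infty)$. Alternatively, it can be read off from Theorem~\ref{thm:classification} and Corollary~\ref{cor:Vx}: $C(x)$ is the positive density constant fixed by the Gaussian normalization, and $V(x)=\mu_x((0,1))>0$.

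Next comes normalization: $F(x,0)=G(x)/G(x)=1$. For the cocycle identity, fix admissible $x,r,s$, meaning $x>0$, $x+2r>0$ and $x+2r+2s>0$. Then all three quantities $F(x,r)$, $F(x+2r,s)$ and $F(x,r+s)$ are defined, since $(x+2r)+2s=x+2(r+s)$. The identity then telescopes:
\[
F(x+2r,s)\,F(x,r)=\frac{G(x+2r+2s)}{G(x+2r)}\cdot\frac{G(x+2r)}{G(x)}=\frac{G(x+2(r+s))}{G(x)}=F(x,r+s).
\]
Applying this with $G=C$ and with $G=V$ yields the theorem for $R$ and for $T$.

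The only point needing care, and the closest thing to an obstacle, is the bookkeeping of admissibility. Negative shifts are allowed, so I must confirm that the intermediate point $x+2r$ lies in $\R_{>0}$. This is exactly what ``admissible'' is defined to mean in this section, so it holds by hypothesis. As an optional cross-check, I would verify the identity for $R$ from the closed forms in Proposition~\ref{prop:closedforms}: the factors $\pi^r\pi^s=\pi^{r+s}$ combine and the Gamma ratios telescope, with the $T$ case following in the same way.
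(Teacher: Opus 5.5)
Your proof is correct. It differs from the paper's in where the telescoping happens. The paper substitutes the Gamma closed forms of Proposition~\ref{prop:closedforms} and observes two things: the intermediate factor $\Gamma(x/2+r+1)$ (resp.\ $\Gamma(x/2+r)$ for $R$) cancels, and $\pi^s\pi^r=\pi^{r+s}$. You instead telescope directly in Definition~\ref{def:transports}, using only that $R$ and $T$ are ratios $G(x+2r)/G(x)$ of a positive function $G\in\{C,V\}$.

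Your route is more general and more elementary. It needs no Gamma identities and nothing beyond positivity of $C$ and $V$. Your admissibility bookkeeping ($x$, $x+2r$, $x+2r+2s$ all positive) is exactly what is required.

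It also makes visible something the closed-form computation obscures. $R$ and $T$ are themselves multiplicative coboundaries, of $C$ and $V$ respectively, in the sense of \eqref{eq:coboundary} with $F_2\equiv 1$, so both are cohomologically trivial. Consequently Theorem~\ref{thm:coboundary} reduces to the observation that $C/V=x/2$ induces the coboundary $(x+2r)/x$.

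The paper's version, in exchange, checks the identity directly on the explicit formulas reused in Corollary~\ref{cor:recurrence} and Section~\ref{sec:bohrmollerup}. Even so, it is the same cancellation in disguise, since those closed forms are themselves such ratios.
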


\begin{proof}
Normalization is immediate from the closed forms in Proposition~\ref{prop:closedforms}, since $\Gamma$ is nowhere zero on $\R_{>0}$. For the cocycle identity in the case of $T$,
\begin{align*}
T(x+2r,\,s)\,T(x,\,r)
&= \pi^s\,\frac{\Gamma(x/2+r+1)}{\Gamma(x/2+r+s+1)}\cdot\pi^r\,\frac{\Gamma(x/2+1)}{\Gamma(x/2+r+1)}\\
&= \pi^{r+s}\,\frac{\Gamma(x/2+1)}{\Gamma(x/2+r+s+1)} \;=\; T(x,\,r+s).
\end{align*}
The intermediate Gamma factor cancels exactly. The argument for $R$ is identical, with $\Gamma(x/2+1)$ and $\Gamma(x/2+r+s+1)$ replaced by $\Gamma(x/2)$ and $\Gamma(x/2+r+s)$ respectively.
\end{proof}

\begin{theorem}[$R$ and $T$ differ by an explicit coboundary]\label{thm:coboundary}
With $\beta(x) := x$, we have
\[
\frac{R(x,r)}{T(x,r)} \;=\; \frac{x+2r}{x} \;=\; \frac{\beta(x+2r)}{\beta(x)}.
\]
Hence $R$ and $T$ represent the same cohomology class up to the multiplicative coboundary of the linear function $x\mapsto x$.
\end{theorem}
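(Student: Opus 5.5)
The identity follows directly from the last clause of Proposition~\ref{prop:closedforms}, which gives $T(x,r)=\frac{x}{x+2r}\,R(x,r)$ for all admissible $x,r$. Admissibility means $x>0$ and $x+2r>0$, so both $T(x,r)$ and $\frac{x}{x+2r}$ are strictly positive. The former holds because $\Gamma$ is positive on $\R_{>0}$ and $\pi^r>0$. We may therefore divide and obtain
\[
\frac{R(x,r)}{T(x,r)} \;=\; \frac{x+2r}{x}.
\]
With $\beta(x)=x$, which is a positive function on $X=\R_{>0}$ as the definition of coboundary in~\eqref{eq:coboundary} requires, the right-hand side is literally $\beta(x+2r)/\beta(x)$. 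This establishes the displayed identity.

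As an independent check that does not route through the proposition, I would also verify the identity directly from Definition~\ref{def:transports} and the relation $V(x)=(2/x)\,C(x)$ from Section~\ref{sec:prelim}, which holds for all $x>0$. Computing,
\[
T(x,r)=\frac{V(x+2r)}{V(x)}=\frac{\frac{2}{x+2r}C(x+2r)}{\frac{2}{x}C(x)}=\frac{x}{x+2r}\,R(x,r).
\]
This shows that the coboundary is precisely the trace of the factor $2/x$ relating $V$ to $C$. No Gamma-function identity beyond $\Gamma(y+1)=y\Gamma(y)$, already used to get $V=(2/x)C$, is needed.

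For the cohomological phrasing, I would note that $(x,r)\mapsto \beta(x+2r)/\beta(x)$ is itself a dimension-shift cocycle: it satisfies~\eqref{eq:cocycle} by telescoping, and equals $1$ at $r=0$. Since both $R$ and $T$ are cocycles by Theorem~\ref{thm:cocycles}, the relation $R=T\cdot(\delta\beta)$ says exactly that they differ by a coboundary in the sense of~\eqref{eq:coboundary}. Hence they define the same class modulo multiplicative coboundaries.

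There is no real obstacle. The only point requiring care is positivity, so that the division is legitimate and $\beta$ takes values in $(0,\infty)$. This is guaranteed by the restriction to admissible pairs with $x+2r>0$.
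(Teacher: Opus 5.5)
Your proposal is correct and follows the paper's proof, which likewise reads the identity directly off the relation $T(x,r)=\frac{x}{x+2r}\,R(x,r)$ in Proposition~\ref{prop:closedforms} and matches $(x+2r)/x$ with $\beta(x+2r)/\beta(x)$. Your supplementary derivation from $V(x)=(2/x)\,C(x)$ is the content of the paper's Remark~\ref{rmk:elem-coboundary}, and your positivity and telescoping checks are sound details that the paper leaves implicit.
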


\begin{proof}
Immediate from Proposition~\ref{prop:closedforms}: $R(x,r)/T(x,r) = (x+2r)/x$, which is the coboundary of $\beta(x)=x$ in the sense of~\eqref{eq:coboundary}.
\end{proof}

\begin{remark}[Elementary origin of the coboundary]\label{rmk:elem-coboundary}
The function $\beta(x)=x$ is the dimension itself, viewed as a function on $X = \R_{>0}$. The relation~\eqref{eq:VC-relation} stated $V(n) = (2/n)\,C(n)$ at integer dimensions; in continuous dimension this becomes $V(x)=(2/x)\,C(x)$, so $V$ and $C$ differ by the multiplicative function $2/x$. Theorem~\ref{thm:coboundary} is the cocycle-theoretic restatement of this elementary fact: there is a single underlying object $\mu_x$ — the radial Mellin--Gamma measure of Theorem~\ref{thm:classification} — and the choice between $R$ and $T$ amounts to whether one tracks shifts of the radial-integration normalization or shifts of the unit-ball mass.
\end{remark}

The function $\beta(x) = x$ has a second interpretation as a categorical-dimension functional in Deligne's interpolation categories.

\begin{proposition}[Categorical-dimension recovery of the coboundary]\label{prop:cat-coboundary}
Let $\mathrm{Rep}(O_t)$ denote Deligne's rigid symmetric monoidal interpolation category for the orthogonal series; the framework is recalled in~\cite[\S\S 9--10]{Deligne2007}, and we use the Brauer-category presentation of~\cite[\S\S 2.1--2.2]{ComesHeidersdorf2017}, in which $\mathrm{Rep}(O_t)$ is the Karoubi envelope of the additive envelope of the Brauer category $\mathcal B(t)$. Let $\mathsf X_t$ denote its standard generating object (the image of the object $1$ of $\mathcal B(t)$ under the construction). Then
\begin{equation}\label{eq:cat-dim}
\dim_{\mathrm{cat}}(\mathsf X_t) \;=\; t,
\end{equation}
and consequently
\[
\frac{R(x,r)}{T(x,r)} \;=\; \frac{\dim_{\mathrm{cat}}(\mathsf X_{x+2r})}{\dim_{\mathrm{cat}}(\mathsf X_x)}
\qquad\text{for all admissible } x,r.
\]
That is, the multiplicative coboundary of Theorem~\ref{thm:coboundary} is the categorical-dimension functional applied to the standard object of $\mathrm{Rep}(O_t)$.
\end{proposition}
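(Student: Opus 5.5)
The plan has two parts: compute the categorical dimension of $\mathsf X_t$ directly from the Brauer-category presentation, then substitute that value into Theorem~\ref{thm:coboundary}. Recall that in $\mathcal B(t)$ the object $1$ is self-dual. The duality is exhibited by the cup $\cup\colon \varnothing\to 1\otimes 1$ and the cap $\cap\colon 1\otimes 1\to\varnothing$. The defining relation of $\mathcal B(t)$ (as in~\cite[\S 2.1]{ComesHeidersdorf2017}) is that a closed loop evaluates to the scalar $t$, so $\cap\circ\cup = t\cdot\mathrm{id}_{\varnothing}$. The symmetric braiding $\sigma_{1,1}$ is the crossing diagram.

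For the first part, I would use the definition of categorical dimension in a rigid symmetric monoidal category. It is the trace of the identity, $\dim_{\mathrm{cat}}(\mathsf X)=\mathrm{ev}\circ\sigma\circ\mathrm{coev}\in\mathrm{End}(\mathbb 1)$. For $\mathsf X_t$ with the cup and cap as coevaluation and evaluation, this composite is a single diagram: cup, then crossing, then cap. Its underlying Brauer diagram is one closed loop, since the crossing can be straightened by planar isotopy and diagrams compose by concatenation up to isotopy with loops removed. So it equals $t\cdot\mathrm{id}_{\varnothing}$. Identifying $\mathrm{End}(\mathbb 1)$ with the ground field gives~\eqref{eq:cat-dim}.

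Next I would check that this value is unchanged by the passage to the additive envelope and the Karoubi envelope. Both constructions are fully faithful on the original objects and preserve the monoidal and rigid structure. Hence the trace computed in $\mathcal B(t)$ is the trace in $\mathrm{Rep}(O_t)$.

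The main point to handle carefully is that the dimension does not depend on the chosen duality data. I would note that duals are unique up to unique isomorphism, and that the categorical trace is invariant under such changes, so any choice of $(\mathrm{ev},\mathrm{coev})$ gives $t$. A second point is that the cup and cap satisfy the zigzag identities; this holds because the zigzag is isotopic to the straight strand and involves no loop, so no scalar appears.

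Finally, substituting $\dim_{\mathrm{cat}}(\mathsf X_x)=x=\beta(x)$ and $\dim_{\mathrm{cat}}(\mathsf X_{x+2r})=x+2r$ into Theorem~\ref{thm:coboundary} gives the displayed identity for all admissible $x,r$. Here the parameter $t$ ranges over positive reals, which is allowed because $\mathcal B(t)$ is defined for arbitrary $t$ in the ground field.
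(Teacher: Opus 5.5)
Your proposal is correct and follows the same route as the paper: the trace of $\mathrm{id}$ on the generating object of $\mathcal B(t)$ is the closed loop and so equals $t$, this value survives passage to the additive and Karoubi envelopes, and the ratio identity is then a substitution into Theorem~\ref{thm:coboundary}. Your extra checks (zigzag identities, independence of the chosen duality data) are harmless additions. One justification needs rewording: cup--crossing--cap equals a single loop because Brauer diagrams are identified by their underlying pairing of endpoints, not because the crossing can be removed by planar isotopy.
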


\begin{proof}
The categorical dimension of an object $X$ in a rigid symmetric monoidal category is, by definition, the categorical trace of $\mathrm{id}_X$: the scalar in $\mathrm{End}(\mathbf 1)$ obtained by composing the coevaluation $\mathbf 1 \to X\otimes X^*$ with the evaluation $X^*\otimes X\to\mathbf 1$ via the symmetry isomorphism. In the Brauer category $\mathcal B(t)$, this trace on the generating object is the closed loop, and the defining relations of $\mathcal B(t)$ assign to the closed loop the scalar $t\in k$~\cite[\S 2.1]{ComesHeidersdorf2017}; compare~\cite[\S 9.2]{Deligne2007}. Categorical dimension is preserved under additive envelope and Karoubi envelope, so $\dim_{\mathrm{cat}}(\mathsf X_t) = t$ in $\mathrm{Rep}(O_t)$. Combining~\eqref{eq:cat-dim} with Theorem~\ref{thm:coboundary},
\[
\frac{\dim_{\mathrm{cat}}(\mathsf X_{x+2r})}{\dim_{\mathrm{cat}}(\mathsf X_x)} \;=\; \frac{x+2r}{x} \;=\; \frac{R(x,r)}{T(x,r)}. \qedhere
\]
\end{proof}

\begin{remark}[What the categorical interpretation does and does not do]\label{rmk:cat-scope}
Proposition~\ref{prop:cat-coboundary} does not derive the radial measure $\mu_x$, the ball-volume formula $V(x)$, or either transport from categorical considerations. It identifies the analytic coboundary $\beta(x)=x$ — already determined by the integration identity $V(x) = (2/x)\,C(x)$ — with a categorical-dimension functional that exists in $\mathrm{Rep}(O_t)$ for independent reasons. The dimension parameter $x$ in Axiom~\ref{ax:scaling} can therefore be read as the categorical dimension of a canonical object, but the analytic content of the paper does not depend on this reading.
\end{remark}

\begin{corollary}[Ball-volume recurrence]\label{cor:recurrence}
For every $x>0$,
\[
V(x+2) \;=\; \frac{2\pi}{x+2}\,V(x).
\]
\end{corollary}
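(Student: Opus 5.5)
The recurrence is the special case $r=1$ of the ball-volume transport. By Definition~\ref{def:transports}, $V(x+2)=T(x,1)\,V(x)$, and $x+2>0$ holds automatically for $x>0$, so the pair $(x,1)$ is admissible. It therefore suffices to evaluate $T(x,1)$.

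First I will use the closed form of Proposition~\ref{prop:closedforms} with $r=1$:
\[
T(x,1)=\pi\,\frac{\Gamma(x/2+1)}{\Gamma(x/2+2)}.
\]
Next I apply the functional equation $\Gamma(s+1)=s\,\Gamma(s)$ at $s=x/2+1>0$, which gives $\Gamma(x/2+2)=(x/2+1)\Gamma(x/2+1)$. The factor $\Gamma(x/2+1)$ is nonzero, so it cancels, and
\[
T(x,1)=\frac{\pi}{x/2+1}=\frac{2\pi}{x+2}.
\]

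As a cross-check that avoids $T$, I can argue through $R$. From the closed form, $R(x,1)=\pi\,\Gamma(x/2)/\Gamma(x/2+1)=2\pi/x$. Theorem~\ref{thm:coboundary} then gives $T(x,1)=\frac{x}{x+2}R(x,1)=\frac{2\pi}{x+2}$, which agrees. This second route also shows the recurrence as the product of the radial shift factor $2\pi/x$ and the coboundary correction $x/(x+2)$.

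There is no real obstacle here. The only point needing care is that every Gamma value involved is taken at a positive argument, so the cancellation is legitimate. That follows because $x>0$ and all arguments are at least $x/2$.
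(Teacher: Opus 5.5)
Your proposal is correct and is essentially the paper's proof: evaluate the closed form $T(x,1)=\pi\,\Gamma(x/2+1)/\Gamma(x/2+2)$ from Proposition~\ref{prop:closedforms}, cancel using $\Gamma(x/2+2)=(x/2+1)\Gamma(x/2+1)$, and use $T(x,1)=V(x+2)/V(x)$. The cross-check through $R$ and Theorem~\ref{thm:coboundary} is a harmless consistency check; note that it does not actually avoid $T$, since it still computes $T(x,1)$, only via the coboundary factor.
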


\begin{proof}
By Proposition~\ref{prop:closedforms}, $T(x,1)=\pi\cdot\Gamma(x/2+1)/\Gamma(x/2+2)=\pi/(x/2+1)=2\pi/(x+2)$, and $T(x,1)=V(x+2)/V(x)$.
\end{proof}

This is the continuous-dimension form of the classical Euclidean recurrence $V_{n+2}=\frac{2\pi}{n+2}V_n$, and is the input to the next section's independent characterization of $T$.

\section{Independent Bohr--Mollerup characterization of $T$}\label{sec:bohrmollerup}

The cocycle $T$ has been derived in Section~\ref{sec:cocycles} from the radial measure classification of Theorem~\ref{thm:classification}. We now give a logically independent characterization that does not invoke radial integration at all, only the recurrence and a regularity condition.

Fix $x>0$ and set $a := x/2 + 1 > 0$. Define the normalized reciprocal of $T$ by
\begin{equation}\label{eq:Gdef}
G_x(r) \;:=\; \frac{\pi^r}{T(x,r)}, \qquad r\in[0,\infty).
\end{equation}

\begin{theorem}[Shifted Bohr--Mollerup characterization]\label{thm:bohrmollerup}
Let $x>0$ and let $T(x,\cdot)\colon[0,\infty)\to(0,\infty)$ be a positive function. Suppose:
\begin{enumerate}[label={\rm(BM\arabic*)}, leftmargin=2.5em, topsep=0pt, itemsep=2pt]
\item\label{BM1} $T(x,0)=1$;
\item\label{BM2} \emph{(Recurrence)} The function $G_x$ defined by~\eqref{eq:Gdef} satisfies $G_x(r+1) = (a+r)\,G_x(r)$ for all $r\ge 0$, where $a=x/2+1$;
\item\label{BM3} \emph{(Log-convexity)} $r\mapsto \log G_x(r)$ is convex on $(0,\infty)$.
\end{enumerate}
Then
\[
T(x,r) \;=\; \pi^r\,\frac{\Gamma(x/2+1)}{\Gamma(x/2+r+1)}\qquad\text{for all } r\ge 0.
\]
\end{theorem}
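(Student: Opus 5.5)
The plan is to reduce the statement to the classical Bohr--Mollerup theorem via the normalization $F(r) := G_x(r)/\Gamma(a)$ and then compare with $\Gamma(a+r)/\Gamma(a)$. By \ref{BM1}, $G_x(0)=\pi^0/T(x,0)=1$. By \ref{BM2}, $G_x(r+1)=(a+r)G_x(r)$. The candidate $H(r):=\Gamma(a+r)/\Gamma(a)$ satisfies the same three conditions: $H(0)=1$, $H(r+1)=(a+r)H(r)$, and $\log H$ is convex because $\Gamma$ is log-convex on $(0,\infty)$ and $a>0$. So it suffices to show that $G_x=H$ on $[0,\infty)$. Then $T(x,r)=\pi^r/G_x(r)=\pi^r\Gamma(x/2+1)/\Gamma(x/2+r+1)$.

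For uniqueness I will run the standard Artin argument directly on the ratio, rather than trying to transform $G_x$ into a function of $\Gamma$'s own argument. Set $\phi:=G_x/H$. Then $\phi(0)=1$ and $\phi(r+1)=\phi(r)$, so $\phi$ is $1$-periodic and $\phi(n)=1$ for all integers $n\ge0$. Fix $r\in(0,1)$ and an integer $n\ge2$. Log-convexity \ref{BM3} of $g:=\log G_x$ gives the slope inequalities on $[n-1,n]$, $[n,n+r]$ and $[n,n+1]$:
\[
\log(a+n-1)\;\le\;\frac{g(n+r)-g(n)}{r}\;\le\;\log(a+n).
\]
Here I use $g(n)-g(n-1)=\log(a+n-1)$ and $g(n+1)-g(n)=\log(a+n)$, both from \ref{BM2}. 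Iterating the recurrence gives $g(n+r)=g(r)+\sum_{k=0}^{n-1}\log(a+r+k)$ and $g(n)=\sum_{k=0}^{n-1}\log(a+k)$. This pins $G_x(r)$ between two explicit products:
\[
\frac{(a+n-1)^r\prod_{k=0}^{n-1}(a+k)}{\prod_{k=0}^{n-1}(a+r+k)}\;\le\;G_x(r)\;\le\;\frac{(a+n)^r\prod_{k=0}^{n-1}(a+k)}{\prod_{k=0}^{n-1}(a+r+k)}.
\]
The ratio of the two bounds is $((a+n)/(a+n-1))^r\to1$. Hence $G_x(r)$ equals the Gauss-type limit $\lim_n n^r\prod_{k=0}^{n-1}(a+k)/(a+r+k)$, and the same argument applied to $H$ (or Gauss's product formula for $\Gamma$) identifies this limit with $\Gamma(a+r)/\Gamma(a)$. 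Periodicity then extends the identity $G_x(r)=H(r)$ from $(0,1)$ to all $r\ge0$.

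The main obstacle is a boundary issue. \ref{BM3} is stated only on $(0,\infty)$, while the slope argument above uses points $n-1\ge1$, $n$, $n+r$, $n+1$. Since I chose $n\ge2$, all these points lie in $(0,\infty)$, so convexity on the open half-line suffices; the value at $r=0$ enters only through \ref{BM1} and the recurrence. I will also check that positivity of $T$ makes $g$ well-defined, and that $a+k>0$ for all $k$, which holds because $a=x/2+1>1$. The remaining computations are routine.
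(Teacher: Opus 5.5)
Your argument is correct. It shares the paper's outer skeleton: check that $G_x$ and $H_a(r)=\Gamma(a+r)/\Gamma(a)$ both satisfy $F(0)=1$, $F(r+1)=(a+r)F(r)$, and log-convexity. The difference is where the real work happens. The paper then invokes the shifted Bohr--Mollerup uniqueness theorem of Marichal and Zena\"{\i}di as a black box. You instead prove that uniqueness by hand. You run Artin's three-chord squeeze on $[n-1,n]$, $[n,n+r]$, $[n,n+1]$ to trap $G_x(r)$ between two products whose ratio $((a+n)/(a+n-1))^r$ tends to $1$. This gives the Gauss-type representation $G_x(r)=\lim_n n^r\prod_{k=0}^{n-1}(a+k)/(a+r+k)$, and you extend the equality from $[0,1)$ using the $1$-periodicity of $G_x/H$. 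The citation buys brevity and a place in a general framework for difference equations $\Delta\log F=g$. Your route buys self-containment and an explicit product formula. It also makes the hypotheses transparent: you use convexity only at points $\ge 1$, and in fact only for large $n$, so eventual log-convexity would suffice. This cleanly resolves the mismatch between (BM3), stated on the open half-line $(0,\infty)$, and the conclusion, asserted on $[0,\infty)$, a point the paper leaves implicit in its appeal to the cited theorem. The only blemish is your opening sentence about reducing to the classical theorem via $F(r):=G_x(r)/\Gamma(a)$. That is not what you actually do, and it can be deleted.
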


\begin{proof}
By~\ref{BM1} and~\eqref{eq:Gdef}, $G_x(0)=1$. By~\ref{BM2}, $G_x(r+1)=(a+r)G_x(r)$. By~\ref{BM3}, $G_x$ is positive and log-convex on $(0,\infty)$.

The function
\[
H_a(r) \;:=\; \frac{\Gamma(a+r)}{\Gamma(a)}, \qquad r\ge 0,
\]
satisfies $H_a(0)=1$, $H_a(r+1)=(a+r)H_a(r)$ (since $\Gamma(a+r+1)=(a+r)\Gamma(a+r)$), and is positive and log-convex on $(0,\infty)$ because $\log\Gamma$ is convex on $(0,\infty)$ by the classical Bohr--Mollerup theorem~\cite[Thm.~2.1, p.~14]{Artin1964}.

The shifted Bohr--Mollerup uniqueness statement~\cite[Thm.~3.1]{MarichalZenaidi2022} (see also~\cite[\S 2]{MarichalZenaidi2024} for the tutorial form), applied to the difference equation $\Delta\!\log F(r) = \log(a+r)$ with $F(0)=1$, asserts that $H_a$ is the unique positive log-convex solution on $[0,\infty)$. Since $G_x$ satisfies the same three conditions, $G_x = H_a$ on $\Z_{\ge 0}$, and the shifted Bohr--Mollerup theorem extends this equality to all $r\ge 0$. Hence
\[
T(x,r) \;=\; \frac{\pi^r}{G_x(r)} \;=\; \frac{\pi^r\,\Gamma(a)}{\Gamma(a+r)} \;=\; \pi^r\,\frac{\Gamma(x/2+1)}{\Gamma(x/2+r+1)}. \qedhere
\]
\end{proof}

Theorem~\ref{thm:bohrmollerup} establishes the same closed form for $T$ as Proposition~\ref{prop:closedforms}, but via a different structural route: \emph{recurrence plus log-convexity} rather than \emph{radial measure classification}. The two routes meet at the same cocycle. This is what one expects from a robust object: distinct structural conditions, each sufficient on its own, identifying the same canonical analytic continuation.

\begin{remark}[Why log-convexity is the natural regularity]
Among the many positive functions on $(0,\infty)$ that satisfy $\Gamma(x+1)=x\Gamma(x)$ and $\Gamma(1)=1$, only one — the Euler $\Gamma$ — is log-convex. Hadamard's interpolation $1/\Gamma(z)$ is entire but does \emph{not} satisfy the Bohr--Mollerup recurrence on $(0,\infty)$ in the sense required, and is therefore not a counterexample to the rigidity used here; what it does show is that holomorphy alone is too weak to single out $\Gamma$. Log-convexity, by contrast, is exactly strong enough.
\end{remark}

\section{Scalar observables of the radial measure}\label{sec:observables}

The Mellin--Gamma classification of Theorem~\ref{thm:classification} produces a parametric family $\{\mu_x\}_{x>0}$ of measures on $(0,\infty)$, of which the unit-ball volume $V(x) = \mu_x((0,1))$ is one scalar evaluation among many. We record two further canonical families of scalar observables, both controlled by the same Mellin--Gamma mechanism, and show how the ball-volume cocycle $T$ embeds into a wider system of dimension-shift transports.

\subsection*{Sublevel observables}

For $a>0$, define
\[
F_a(x) \;:=\; \mu_x\bigl((0,a)\bigr).
\]
Setting $a=1$ recovers $V(x)$. Using Theorem~\ref{thm:classification},
\[
F_a(x) \;=\; \frac{\pi^{x/2}}{\Gamma(x/2)}\int_0^a u^{x/2-1}\,du \;=\; \frac{\pi^{x/2}}{\Gamma(x/2)}\cdot\frac{2\,a^{x/2}}{x} \;=\; \frac{(\pi a)^{x/2}}{\Gamma(x/2+1)},
\]
using $\Gamma(x/2+1) = (x/2)\,\Gamma(x/2)$.

\begin{proposition}[Sublevel observables]\label{prop:sublevel}
For every $a>0$ and every $x>0$,
\[
F_a(x) \;=\; \frac{(\pi a)^{x/2}}{\Gamma(x/2+1)} \;=\; a^{x/2}\,V(x).
\]
The associated dimension-shift transport
\[
T_a(x,r) \;:=\; \frac{F_a(x+2r)}{F_a(x)}
\]
is a dimension-shift cocycle satisfying
\[
T_a(x,r) \;=\; a^{r}\,T(x,r) \;=\; \pi^r a^r\,\frac{\Gamma(x/2+1)}{\Gamma(x/2+r+1)}.
\]
\end{proposition}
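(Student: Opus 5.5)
The plan is to read off the closed form from Theorem~\ref{thm:classification} and then reduce the cocycle claim to Theorem~\ref{thm:cocycles}.

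\emph{Closed form.} By Theorem~\ref{thm:classification}, $\mu_x$ has density $\frac{\pi^{x/2}}{\Gamma(x/2)}u^{x/2-1}$ with respect to Lebesgue measure, so $F_a(x)$ is an integral of this density over $(0,a)$. Integrability near $0$ is guaranteed because $x/2-1>-1$. Evaluating $\int_0^a u^{x/2-1}\,du = 2a^{x/2}/x$ and applying $\Gamma(x/2+1)=(x/2)\Gamma(x/2)$ gives $F_a(x) = (\pi a)^{x/2}/\Gamma(x/2+1)$, exactly as in the computation preceding the statement. Comparing with Corollary~\ref{cor:Vx} then yields $F_a(x) = a^{x/2}V(x)$.

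\emph{Transport formula.} For admissible $x,r$ (that is, $x+2r>0$), I would form the quotient
\[
T_a(x,r) = \frac{F_a(x+2r)}{F_a(x)} = \frac{a^{x/2+r}}{a^{x/2}}\cdot\frac{V(x+2r)}{V(x)} = a^r\,T(x,r).
\]
Substituting the closed form of $T$ from Proposition~\ref{prop:closedforms} gives the displayed expression $\pi^r a^r\,\Gamma(x/2+1)/\Gamma(x/2+r+1)$. The quotient is well defined and positive because $F_a$ is strictly positive, as $\Gamma>0$ on $\R_{>0}$.

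\emph{Cocycle property.} The factor $a^r$ is itself a cocycle that does not depend on $x$:
\[
a^0=1, \qquad a^{r+s}=a^s\,a^r.
\]
A pointwise product of two cocycles is again a cocycle. This follows immediately from \eqref{eq:cocycle}: both sides factor, and the identity holds for each factor separately. Combining this with Theorem~\ref{thm:cocycles} for $T$ shows that $T_a$ satisfies $T_a(x,0)=1$ and $T_a(x,r+s)=T_a(x+2r,s)\,T_a(x,r)$. Alternatively, one can observe directly that any ratio of the form $F(x+2r)/F(x)$ with $F>0$ is automatically a cocycle, since the intermediate factor $F(x+2r)$ telescopes.

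\emph{Main obstacle.} There is essentially no obstacle here. The only point needing care is to check admissibility, so that every argument stays in $\R_{>0}$ and all the Gamma values involved are finite and nonzero.
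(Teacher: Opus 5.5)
Your proposal is correct and follows essentially the paper's proof: you get the closed form by integrating the Mellin--Gamma density of Theorem~\ref{thm:classification} over $(0,a)$, and you get the cocycle property of $T_a = a^r\,T$ from Theorem~\ref{thm:cocycles}, since the $x$-independent character $r\mapsto a^r$ itself satisfies~\eqref{eq:cocycle}. Your telescoping alternative is also valid; it yields the cocycle identity for any transport of the form $F(x+2r)/F(x)$ with $F>0$, without using Theorem~\ref{thm:cocycles} at all.
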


\begin{proof}
The closed form is immediate from the calculation above. The transport $T_a$ inherits the cocycle property from $T$ (which we know to be a cocycle by Theorem~\ref{thm:cocycles}), since multiplication by the multiplicative character $r\mapsto a^r$ preserves~\eqref{eq:cocycle}.
\end{proof}

The character $r\mapsto a^r$ is itself a coboundary for the dimension-shift relation: it is the coboundary of $\beta_a(x) = a^{x/2}$, since $\beta_a(x+2r)/\beta_a(x) = a^{x/2+r}/a^{x/2} = a^r$. Hence $T_a$ and $T$ define the same coboundary class, with $a=1$ as the canonical representative.

\subsection*{Gaussian moment observables}

For $q\in\R$ with $x/2 + q > 0$, define
\[
M_q(x) \;:=\; \int_0^\infty u^q\,e^{-u}\,d\mu_x(u).
\]
This is the $q$-th moment of $\mu_x$ against the Gaussian kernel; $M_0(x) = \pi^{x/2}$ recovers Axiom~\ref{ax:gauss}.

\begin{proposition}[Gaussian moment observables]\label{prop:moments}
For every $x>0$ and every $q$ with $x/2+q>0$,
\[
M_q(x) \;=\; \pi^{x/2}\,\frac{\Gamma(x/2+q)}{\Gamma(x/2)}.
\]
\end{proposition}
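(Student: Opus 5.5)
The plan is to substitute the closed-form density of $\mu_x$ from Theorem~\ref{thm:classification} into the definition of $M_q(x)$ and identify the resulting integral as a Gamma integral. The one subtlety is that $u^q e^{-u}$ is not in $\Cc(\R_{>0})$, so Theorem~\ref{thm:classification} does not apply to it directly.

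First I would fix the measure-level identification. Theorem~\ref{thm:classification} identifies $\mu_x$, as a Radon measure on $(0,\infty)$, with $\frac{\pi^{x/2}}{\Gamma(x/2)}u^{x/2-1}\,du$. Two positive Radon measures that agree on $\Cc(\R_{>0})$ agree on all Borel sets, and hence agree on integrals of all nonnegative Borel functions, by simple-function approximation and monotone convergence. This is exactly the extension recorded in Corollary~\ref{cor:mellin-form}. Since $u^q e^{-u}\ge 0$ is Borel on $(0,\infty)$, it gives
\[
M_q(x) \;=\; \frac{\pi^{x/2}}{\Gamma(x/2)}\int_0^\infty u^{q}e^{-u}\,u^{x/2-1}\,du \;=\; \frac{\pi^{x/2}}{\Gamma(x/2)}\int_0^\infty e^{-u}\,u^{x/2+q-1}\,du,
\]
as an identity in $[0,\infty]$.

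Second, I would evaluate the right-hand integral. It is the Mellin transform of $e^{-u}$ at $s=x/2+q$. Near $0$ the integrand behaves like $u^{s-1}$, which is integrable because $s>0$ by hypothesis. Near $\infty$ the factor $e^{-u}$ ensures convergence. So the integral is finite and equals $\Gamma(x/2+q)$, by the same Mellin identity already cited in Step~3 of the proof of Theorem~\ref{thm:classification}. This yields $M_q(x)=\pi^{x/2}\Gamma(x/2+q)/\Gamma(x/2)$. As a check, $q=0$ gives $M_0(x)=\pi^{x/2}$, which is Axiom~\ref{ax:gauss}.

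The only step needing care is the first: justifying the passage from test functions in $\Cc(\R_{>0})$ to the unbounded-support integrand $u^q e^{-u}$, which may also be singular at $0$ when $q<0$. I would handle it by the monotone-convergence extension described above, or equivalently by truncating the integrand with an increasing sequence of cutoffs in $\Cc(\R_{>0})$ and passing to the limit.
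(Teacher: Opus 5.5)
Your proposal is correct and is essentially the paper's proof: substitute the Mellin--Gamma density from Theorem~\ref{thm:classification} into the definition of $M_q(x)$, then evaluate $\int_0^\infty e^{-u}u^{x/2+q-1}\,du=\Gamma(x/2+q)$, which converges because $x/2+q>0$. The paper leaves implicit the passage from test functions in $\Cc(\R_{>0})$ to the non-compactly-supported integrand $u^q e^{-u}$ (this is the extension stated in Corollary~\ref{cor:mellin-form}), and your monotone-convergence argument simply makes that step explicit.
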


\begin{proof}
By Theorem~\ref{thm:classification},
\[
M_q(x) \;=\; \frac{\pi^{x/2}}{\Gamma(x/2)}\int_0^\infty e^{-u}\,u^{x/2+q-1}\,du \;=\; \frac{\pi^{x/2}}{\Gamma(x/2)}\,\Gamma(x/2+q),
\]
using the convergence of the Mellin integral defining $\Gamma(x/2+q)$ for $x/2+q>0$. \qedhere
\end{proof}

The ratio $\Gamma(x/2+q)/\Gamma(x/2)$ is the Pochhammer-type extension of the rising factorial $(x/2)_q$ to noninteger $q$, and connects the classification of $\mu_x$ to standard Gaussian-radial moment computations: in the integer case $x = n$, $M_q(n)$ recovers the Gaussian moment $\int_{\R^n} |y|^{2q}e^{-|y|^2}\,dy$.

\begin{remark}[Connection to the Gamma probability law]\label{rmk:gamma-law}
The Gaussian-weighted measure $\pi^{-x/2}\,e^{-u}\,d\mu_x(u)$ has total mass $1$ by Axiom~\ref{ax:gauss}, and its density on $(0,\infty)$ is
\[
\frac{u^{x/2-1}\,e^{-u}}{\Gamma(x/2)},
\]
which is the density of the Gamma$(x/2,1)$ probability distribution. In the integer case $x=n$, this corresponds to the law of the squared radius of a Gaussian vector $Y$ in $\R^n$ with density $\pi^{-n/2}e^{-|y|^2}\,dy$ (each coordinate of $Y$ has variance $1/2$, half the probabilists' standard variance): one has $|Y|^2\sim$ Gamma$(n/2,1)$. Equivalently, if $Z$ is a probabilists' standard Gaussian vector in $\R^n$ with density $(2\pi)^{-n/2}e^{-|z|^2/2}\,dz$, then $|Z|^2\sim\chi^2_n$ and $|Z|^2/2\sim$ Gamma$(n/2,1)$. The classification of $\mu_x$ therefore identifies, up to the Gaussian weight $e^{-u}$ and the constant $\pi^{x/2}$, the standard one-parameter family of Gamma laws.
\end{remark}

\subsection*{Synthesis}

Two patterns emerge. First, $V(x)$ is one scalar evaluation of a measure-valued object: changing the test function or test set produces other observables, all controlled by Mellin transforms of $\mu_x$ and reducing to ratios of Gamma values at shifted arguments. Second, every such observable carries a dimension-shift transport, and these transports are related to $T$ either by multiplicative-character twists (sublevel case) or by independent Gamma-ratio formulae (moment case). Theorem~\ref{thm:classification} produces $\mu_x$ as the underlying measure-valued object, and the ball-volume formula~\eqref{eq:Vx} is one of several scalar quantities derivable from it.

\section{The homogeneous radial analogue}\label{sec:homogeneous}

The mechanism underlying Theorem~\ref{thm:classification} — homogeneity in a radial variable, Gaussian normalization, Gamma factor by Mellin transform — is not specific to the Euclidean radial structure. We illustrate this by deriving, under the hypotheses of the generalized polar-coordinate formula of Bui and Randles~\cite{BuiRandles2022}, a homogeneous-radial Gamma identity that reduces to the Euclidean ball-volume formula in the rotationally symmetric case. The result is not new in essence — it is a direct consequence of the polar formula in~\cite{BuiRandles2022} — but recording it here makes the scope of the Mellin--Gamma mechanism explicit.

\subsection*{Setup}

Let $E$ be a real $d\times d$ matrix all of whose eigenvalues have positive real part, and let $\{r^E\}_{r>0}$ denote the associated dilation group, $r^E := \exp(E\log r)$. A function $P\colon\R^d\to[0,\infty)$ is \emph{$E$-homogeneous of degree $1$} if $P(r^E\xi) = r\,P(\xi)$ for all $\xi\in\R^d$ and $r>0$. We assume $P$ is continuous, positive on $\R^d\setminus\{0\}$, and that the unit level set
\[
S_P \;:=\; \{\eta\in\R^d : P(\eta)=1\}
\]
satisfies the regularity hypotheses of~\cite[Thm.~1]{BuiRandles2022} (in particular, $P$ is $C^1$ on $\R^d\setminus\{0\}$ with non-vanishing radial derivative). Under these hypotheses, \cite[Thm.~1]{BuiRandles2022} provides a positive Radon surface measure $\sigma_P$ on $S_P$ and the polar-coordinate formula
\begin{equation}\label{eq:bui-randles-polar}
\int_{\R^d} f(\xi)\,d\xi \;=\; \int_0^\infty\!\int_{S_P} f(r^E\eta)\,d\sigma_P(\eta)\,r^{\mathrm{tr}(E)-1}\,dr
\end{equation}
for nonnegative Borel $f$, where $\mathrm{tr}(E) = \sum_{j=1}^d \lambda_j$ is the trace of $E$ (equivalently, the homogeneous order of Lebesgue measure on $\R^d$ under the dilation group). We write $\mu_P := \mathrm{tr}(E) > 0$.

The Euclidean case is recovered by taking $E=I$ (so $r^E$ is ordinary scalar dilation), $P(\xi) = |\xi|$ (degree $1$ in this convention), $\mu_P = d$, and $\sigma_P$ equal to the standard surface measure on $S^{d-1}$ with $\sigma_P(S^{d-1}) = \omega_{d-1} = 2\pi^{d/2}/\Gamma(d/2)$.

\subsection*{The homogeneous Gamma identity}

\begin{theorem}[Homogeneous radial Gamma identity]\label{thm:homogeneous}
Under the hypotheses above, the unit sublevel set $B_P := \{\xi\in\R^d : P(\xi)<1\}$ has Lebesgue measure
\begin{equation}\label{eq:homogeneous-gamma}
m(B_P) \;=\; \frac{1}{\Gamma(\mu_P+1)}\int_{\R^d} e^{-P(\xi)}\,d\xi.
\end{equation}
Moreover, $\sigma_P(S_P)\,\Gamma(\mu_P) = \int_{\R^d} e^{-P(\xi)}\,d\xi$, so~\eqref{eq:homogeneous-gamma} is equivalent to $m(B_P) = \sigma_P(S_P)/\mu_P$.
\end{theorem}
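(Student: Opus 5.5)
The plan is to apply the polar-coordinate formula~\eqref{eq:bui-randles-polar} twice, once to $f=\mathbf 1_{B_P}$ and once to $f=e^{-P}$. In both cases the integrand depends on $\xi$ only through $P(\xi)$, and $E$-homogeneity gives $P(r^E\eta)=r\,P(\eta)=r$ for $\eta\in S_P$. So the inner integral over $S_P$ collapses to $\sigma_P(S_P)$ times a function of $r$ alone. Both integrals then reduce to one-dimensional Mellin integrals against $r^{\mu_P-1}\,dr$, in the spirit of Theorem~\ref{thm:classification} and Corollary~\ref{cor:mellin-form}.

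\emph{Step 1 (sublevel volume).} Take $f=\mathbf 1_{B_P}$, which is a nonnegative Borel function because $P$ is continuous. For $\eta\in S_P$ we have $f(r^E\eta)=\mathbf 1_{\{r<1\}}$. Formula~\eqref{eq:bui-randles-polar} then gives
\[
m(B_P)=\sigma_P(S_P)\int_0^1 r^{\mu_P-1}\,dr=\frac{\sigma_P(S_P)}{\mu_P},
\]
using $\mu_P=\mathrm{tr}(E)>0$. This uses the positivity of the real parts of the eigenvalues, since the trace is the sum of those real parts.

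\emph{Step 2 (Gaussian-type integral).} Take $f=e^{-P}$, which is nonnegative and continuous. For $\eta\in S_P$ we have $f(r^E\eta)=e^{-r}$, so Tonelli's theorem together with~\eqref{eq:bui-randles-polar} yields
\[
\int_{\R^d}e^{-P(\xi)}\,d\xi=\sigma_P(S_P)\int_0^\infty e^{-r}r^{\mu_P-1}\,dr=\sigma_P(S_P)\,\Gamma(\mu_P).
\]
This is the Mellin value of $e^{-r}$ at $s=\mu_P>0$, and it proves the second assertion. Finiteness holds on both sides provided $\sigma_P(S_P)<\infty$. That follows because $S_P$ is compact: it is closed, and it is bounded since $P\to\infty$ at infinity by homogeneity and positivity on a compact "sphere" of the dilation group. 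A Radon measure on a compact set is finite. Alternatively, finiteness follows directly from Step 1, since $B_P$ is bounded and hence $m(B_P)<\infty$.

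\emph{Step 3 (combination).} Divide the identity of Step 2 by $\Gamma(\mu_P+1)=\mu_P\Gamma(\mu_P)$. This gives $\frac{1}{\Gamma(\mu_P+1)}\int e^{-P}=\sigma_P(S_P)/\mu_P$, which equals $m(B_P)$ by Step 1. This establishes~\eqref{eq:homogeneous-gamma}, and it also shows the stated equivalence with $m(B_P)=\sigma_P(S_P)/\mu_P$.

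The main obstacle is bookkeeping rather than depth. I need to confirm that $\sigma_P(S_P)$ is finite and nonzero, so that the equivalence is not of the form $\infty=\infty$ or $0=0$. I also need to confirm that the exponent in~\eqref{eq:bui-randles-polar} is exactly $\mathrm{tr}(E)-1$, consistent with the degree-one normalization of $P$. I would address finiteness through the boundedness of $B_P$ as above. Nonvanishing follows because $B_P$ contains a neighbourhood of $0$, since $P$ is continuous with $P(0)=0$, so $m(B_P)>0$.
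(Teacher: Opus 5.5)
Your proof is correct and is essentially the paper's argument: apply the polar formula to functions of $P$ (the indicator of $B_P$ and $e^{-P}$), use $P(r^E\eta)=r$ on $S_P$ to reduce the inner integral to $\sigma_P(S_P)$, and eliminate $\sigma_P(S_P)$ using $\Gamma(\mu_P+1)=\mu_P\Gamma(\mu_P)$. Your extra checks that $0<\sigma_P(S_P)<\infty$ (via compactness of $S_P$, and $B_P$ containing a neighbourhood of $0$) are not in the paper's proof; they are sound and make the stated equivalence explicitly an equality of finite positive quantities.
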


\begin{proof}
By~\eqref{eq:bui-randles-polar} applied to $f(\xi) := \Phi(P(\xi))$ for any Borel $\Phi\colon[0,\infty)\to[0,\infty)$, and using $P(r^E\eta) = r$ for $\eta\in S_P$,
\[
\int_{\R^d}\Phi(P(\xi))\,d\xi \;=\; \int_0^\infty\!\int_{S_P}\Phi(r)\,d\sigma_P(\eta)\,r^{\mu_P-1}\,dr \;=\; \sigma_P(S_P)\int_0^\infty \Phi(r)\,r^{\mu_P-1}\,dr.
\]
Take $\Phi(r) = e^{-r}$:
\begin{equation}\label{eq:homog-gauss}
\int_{\R^d} e^{-P(\xi)}\,d\xi \;=\; \sigma_P(S_P)\int_0^\infty e^{-r}\,r^{\mu_P-1}\,dr \;=\; \sigma_P(S_P)\,\Gamma(\mu_P).
\end{equation}
Take $\Phi(r) = \mathbf 1_{(0,1)}(r)$:
\begin{equation}\label{eq:homog-ball}
m(B_P) \;=\; \int_{\R^d}\mathbf 1_{(0,1)}(P(\xi))\,d\xi \;=\; \sigma_P(S_P)\int_0^1 r^{\mu_P-1}\,dr \;=\; \frac{\sigma_P(S_P)}{\mu_P}.
\end{equation}
Eliminating $\sigma_P(S_P)$ between~\eqref{eq:homog-gauss} and~\eqref{eq:homog-ball}, and using $\mu_P\,\Gamma(\mu_P)=\Gamma(\mu_P+1)$,
\[
m(B_P) \;=\; \frac{1}{\mu_P\,\Gamma(\mu_P)}\int_{\R^d}e^{-P(\xi)}\,d\xi \;=\; \frac{1}{\Gamma(\mu_P+1)}\int_{\R^d}e^{-P(\xi)}\,d\xi. \qedhere
\]
\end{proof}

\subsection*{Recovery of the Euclidean formula}

In the rotationally symmetric case $E=I$, $P(\xi) = |\xi|$, $\mu_P = d$, the polar-coordinate computation gives $\int_{\R^d} e^{-|\xi|}\,d\xi = \sigma_P(S_P)\,\Gamma(d)$ via~\eqref{eq:homog-gauss} (note: this is the radial exponential integral against the degree-$1$ norm $|\xi|$, not the squared norm $|\xi|^2$ used elsewhere in the paper). Either way, Theorem~\ref{thm:homogeneous} returns
\[
m(B_d) \;=\; \frac{\sigma_{d-1}(S^{d-1})}{d} \;=\; \frac{2\pi^{d/2}}{d\,\Gamma(d/2)} \;=\; \frac{\pi^{d/2}}{\Gamma(d/2+1)},
\]
which is~\eqref{eq:Vn}. The classical formula is therefore the rotationally symmetric specialization of~\eqref{eq:homogeneous-gamma}.

\subsection*{Scope and comparison with geometric polar frameworks}

Theorem~\ref{thm:homogeneous} is a closed-form identity, not a classification. It takes as input a triple $(E,P,\sigma_P)$ — a contracting dilation generator, a homogeneous defining function, and the attendant Radon surface measure — together with the polar formula~\eqref{eq:bui-randles-polar} that connects them. Given these data, two evaluations of the formula and elimination of $\sigma_P(S_P)$ yield~\eqref{eq:homogeneous-gamma}. The role of the Mellin--Gamma mechanism here is computational: homogeneity supplies the power-law density $r^{\mu_P-1}\,dr$, and Gaussian normalization fixes the surface-mass constant.

This makes precise the relationship between the present paper and the geometric polar framework of Bui--Randles~\cite{BuiRandles2022} and its antecedents. In the Bui--Randles setting, the dimension parameter is fixed (it is the trace $\mu_P=\mathrm{tr}(E)$ of a chosen dilation generator $E$ acting on $\R^d$), and the geometric data $(E,P)$ is primary; the radial measure $r^{\mu_P-1}\,dr$ and the surface measure $\sigma_P$ are derived from it. In the present paper, the situation is inverted: the dimension parameter $x$ is varied, no ambient $\R^d$ or homogeneous function $P$ is assumed, and the radial measure $\mu_x$ is classified directly on $\Cc(\R_{>0})$ from two structural axioms. Theorem~\ref{thm:classification} therefore does not specialize from~\cite{BuiRandles2022}: it produces a measure-valued object on a one-dimensional radial space without any ambient geometry, whereas Bui--Randles construct a measure on $\R^d$ from chosen geometric input. The two frameworks meet only at the level of closed-form identities like~\eqref{eq:homogeneous-gamma}, where the same Mellin--Gamma pattern emerges from both routes. The present paper does not subsume~\cite{BuiRandles2022}, nor is it subsumed by it; the two sit on opposite sides of the constructive/classificatory distinction.

\section{Three routes to the closed form}\label{sec:summary}

The closed form $V(x) = \pi^{x/2}/\Gamma(x/2+1)$ can be reached from any of three logically independent starting points, which we record for comparison.

\smallskip
\noindent\textbf{Interpolation alone.} If one asks for any continuation $V\colon(0,\infty)\to(0,\infty)$ that agrees with $V_n$ at all positive integers, the conditions are underdetermined: a modification by $\sin(\pi z)\,h(z)$ preserves integer values, and Hadamard's 1894 interpolation~\cite{Hadamard1894} provides a concrete second analytic candidate. Integer agreement does not single out the closed form.

\smallskip
\noindent\textbf{Recurrence with log-convexity.} If one imposes the dimension-shift recurrence $V(x+2) = \frac{2\pi}{x+2}V(x)$ with $V(0)=1$, together with log-convexity of an associated function, then by Theorem~\ref{thm:bohrmollerup} the cocycle $T$ — equivalently $V$ — is uniquely determined. This is the Bohr--Mollerup route, and it does not refer to the radial integration operator.

\smallskip
\noindent\textbf{Classification of radial integration.} The route taken in this paper. The radial integration operator $\Iop_x$ is classified directly: scaling covariance forces a power-law density $u^{x/2-1}\,du$, Gaussian normalization fixes its constant to $\pi^{x/2}/\Gamma(x/2)$, and the ball-volume formula is obtained as $\mu_x((0,1))$. The transports $R$ and $T$ emerge as multiplicative dimension-shift cocycles, related by the multiplicative coboundary of $\beta(x)=x$.

\smallskip
The two structural routes (Bohr--Mollerup and the present classification) are independent: neither set of hypotheses entails the other. Both produce the same closed form for $V$. The Gamma function appears in the Bohr--Mollerup route as the unique log-convex generator of the transport recurrence, and in the classification route as the Mellin transform of the Gaussian kernel against the scaling-forced density. The two appearances coincide because the same cocycle $T$ is characterized by both arguments.

The categorical interpretation of Section~\ref{sec:cocycles} is logically separate. Proposition~\ref{prop:cat-coboundary} identifies the coboundary $\beta(x) = x$ with the categorical-dimension functional of the standard object in $\mathrm{Rep}(O_t)$, which provides an external reading of the dimension parameter but does not derive any analytic content from $\mathrm{Rep}(O_t)$. The classification route does not depend on this identification.

\section*{AI use disclosure}
In preparing this manuscript the author made use of several large language models as technical-editorial interlocutors, in particular for prose tightening, structural feedback on the cocycle and categorical-recovery sections, and verification passes on argument flow. All mathematical content---axioms, theorem statements, proofs, and the choice of structural framework---is the author's. The author has independently verified every claim in the paper and accepts full responsibility for its content.

\end{document}